\documentclass[11pt]{amsart}
\usepackage{tikz}
\usetikzlibrary{positioning,arrows.meta}
\usepackage{amsmath,amssymb,amsfonts}
\usepackage{amsthm}
\usepackage{graphicx}
\usepackage{bm}
\usepackage{cite}
\usepackage{geometry}
\usepackage{booktabs}
\usepackage{graphicx}
\usepackage{amsmath}
\usepackage{amsfonts}
\usepackage{amsthm}
\usepackage{amssymb,latexsym}
\usepackage{fixmath}
\usepackage{mathrsfs,amsbsy}
\usepackage{graphicx}
\usepackage{caption}
\usepackage{subcaption}		
\usepackage{bm}
\usepackage{caption}
\usepackage{makecell}
\usepackage{xr}
\usepackage{mathrsfs,amsbsy}
\usepackage{dsfont}
\usepackage{enumerate}
\usepackage{eucal}
\usepackage{color} %red, green, blue, yellow, cyan, magenta, black, white
\definecolor{mygreen}{RGB}{28,172,0} % color values Red, Green, Blue
\definecolor{mylilas}{RGB}{170,55,241}
\usepackage{xcolor}

\definecolor{darkblue}{rgb}{0,0,0.6}

\newcommand{\rev}[1]{#1}
\theoremstyle{plain}
\newtheorem{theorem}{Theorem}[section]

\newtheorem{proposition}[theorem]{Proposition}

\theoremstyle{definition}

\theoremstyle{remark}
\newtheorem{remark}[theorem]{Remark}

\usepackage{xcolor}
\usepackage{adjustbox}
\definecolor{mygreen}{RGB}{28,172,0} % color values Red, Green, Blue
\definecolor{mylilas}{RGB}{170,55,241}

\title{A scalar interface reduction for nonlinear interface problems}
\author{ So-Hsiang Chou}			
\thanks{
 Department of Mathematics and Statistics, Bowling Green
State university, Bowling Green, OH, 43403-0221.
{{\tt email:chou@bgsu.edu}}
}
\date{\today}	
\begin{document}
\begin{abstract}

We study finite element approximations of elliptic and parabolic interface problems with discontinuous coefficients and nonlinear jump conditions. We introduce a scalar interface reduction in which the solution is decomposed into a continuous component and a unit–jump response mode. This representation isolates the interface nonlinearity into a single scalar variable while the bulk problem remains linear.

From this perspective, the nonlinear interface condition is reduced to a scalar nonlinear equation, which may be interpreted as a nonlinear Schur complement associated with the interface degree of freedom. The resulting formulation leads to a simple computational procedure consisting of linear solves combined with a low-dimensional nonlinear update.

Numerical results for representative elliptic and parabolic problems confirm second-order accuracy for interface quantities and demonstrate the effectiveness of the proposed approach.
\end{abstract}
\maketitle
%{\bf \small Key Words.}\keywords{\small{ finite difference method, nonlinear parabolic problems, ion sensors, heat equation.}
%\textbf{Mathematics Subjects Classification}: 65N30, 65N35, 35J05

\section{Introduction}

Interface problems with discontinuous coefficients arise in many
applications, including heat conduction in composite materials and
diffusion in heterogeneous media. In such problems the governing
equation holds separately in subdomains, while transmission conditions
couple the traces of the solution across the interface.

In this paper we consider interface problems with nonlinear jump
conditions and develop a finite element formulation based on a scalar
interface reduction. The key observation is that the solution may be
represented as
\[
u=u_0+s\,u_1,
\]
where $u_0$ is continuous across the interface and $u_1$ is a
unit--jump response mode determined solely by the differential
operator. The scalar parameter $s$ represents the jump amplitude and
contains the entire effect of the nonlinear transmission law.

Substituting this decomposition into the interface condition reduces
the original nonlinear interface problem to a scalar nonlinear
equation for $s$. The bulk components $u_0$ and $u_1$ are obtained
from linear problems independent of the nonlinear jump law.
Consequently, the nonlinear interface coupling is represented by a
single scalar variable, while the finite element computation remains
linear and standard.

Numerical methods for interface problems have been widely studied.
Among them are immersed interface and hybrid difference methods
\cite{leveque1994immersed,li2006immersed,jeon2022immersed},
immersed and interface finite element methods
\cite{jo2018enriched,lin2015,HouWu1997},
and extended finite element approaches
\cite{babuvska1997partition,fries2010extended}.
Finite difference treatments of nonlinear interface jump conditions
have also been investigated in earlier work. In contrast to these approaches,
the present formulation isolates the
nonlinear interface law into a single scalar variable while the bulk
finite element computation remains linear and standard. Consequently,
the nonlinear component of the computation is reduced to a scalar
equation whose dimension is independent of the mesh size.

The principal contribution of this work is the identification of a
low-dimensional interface reduction for nonlinear transmission
problems. The decomposition separates the bulk response of the
differential operator from the nonlinear interface law and leads to a
computational procedure consisting of linear finite element solves
combined with a scalar nonlinear update. At the discrete level, the
resulting scalar equation may be interpreted as a nonlinear Schur
complement associated with the interface degree of freedom.

Although the presentation is restricted to one-dimensional problems
for clarity, the underlying reduction reflects a structural property
of the interface problem and extends naturally to higher-dimensional
settings; see
\cite{chou2026lifting,Chou2026FluxRecovery}.

The remainder of the paper is organized as follows.
Section~2 presents the interface model, the scalar decomposition, the
finite element discretization, the error decomposition, the algebraic
interpretation, the parabolic extension, and the numerical experiments.
Section~3 contains concluding remarks and discusses possible extensions
of the scalar interface reduction framework.

\section{Interface problem and solution decomposition}

We begin with a time-dependent interface problem to illustrate the
generality of the proposed approach. Let $\Omega \subset \mathbb{R}^d$
be a bounded domain partitioned by an interface $\Gamma$ into two
subdomains $\Omega^{-}$ and $\Omega^{+}$. We consider the parabolic problem
\begin{align}
u_t - \nabla \cdot (\beta(x)\nabla u) &= f(x,t), \qquad x \in \Omega^{-}\cup\Omega^{+}, \ t>0,
\end{align}
subject to appropriate boundary and initial conditions. The diffusion
coefficient $\beta$ is piecewise smooth with distinct values in the two
subdomains.

Across the interface $\Gamma$, we impose continuity of flux together
with a nonlinear jump condition
\begin{align}
[\beta \nabla u \cdot n] &= 0, \\
[u] &= g(u^{+},u^{-}),
\end{align}
where $[\cdot]$ denotes the jump across $\Gamma$. The function
$g$ may represent a variety of nonlinear transmission laws.
Typical examples include polynomial laws such as
\[
[u] = \lambda\, u^{+} u^{-},
\]
or more general nonlinear responses of the form
\[
[u] = g(u^{+},u^{-}),
\]
arising in interface models with nonlinear reactions or contact-type
conditions.

This structure suggests a decomposition of the form
\[
u = u_0 + s\,u_1,
\]
where $u_0$ is continuous across the interface and $u_1$ is a
unit–jump response satisfying $[u_1]=1$ on $\Gamma$ together with
homogeneous boundary conditions. The scalar parameter $s$ represents
the amplitude of the interface jump.

Substituting this decomposition into the governing equations shows that
the bulk problem is independent of the interface nonlinearity and remains
linear, while the nonlinear interface condition reduces to a scalar
nonlinear equation for $s$. In this sense, the full nonlinear coupling
is captured by a single interface degree of freedom.

For clarity of presentation, we focus first on the steady-state
(elliptic) case, where the essential structure of the method can be
fully exposed. We therefore consider the steady-state problem on a one-dimensional
domain $\Omega = (-1,1)$ with an interface point $\alpha \in (-1,1)$:
\begin{align}
- (\beta(x) u')' &= f(x), \qquad x \in (-1,\alpha)\cup(\alpha,1),
\end{align}
where the coefficient $\beta$ is piecewise constant,
\[
\beta(x) =
\begin{cases}
\beta^{-}, & x<\alpha,\\
\beta^{+}, & x>\alpha,
\end{cases}
\]
with $\beta^{-},\beta^{+} > 0$.

The solution satisfies the interface conditions
\begin{align}
[\beta u'] &= 0, \\
[u] &= g(u^{+},u^{-}),
\end{align}
together with appropriate boundary conditions at $x=-1$ and $x=1$, e.g., $u(-1)=\xi,u(1)=\eta$.

This one-dimensional setting provides a minimal framework in which the
interface reduction can be derived explicitly while retaining the full
nonlinear structure of the transmission condition. The nonlinear
interface coupling will be shown to reduce to a scalar equation, while
the remaining components are obtained from linear interface problems.

\subsection{Continuous decomposition}

A key observation is that the interface jump can be isolated through a decomposition of the solution.
We first define the \emph{zero–jump component} $u_0$ as the solution of
\begin{equation}
-(\beta u_{0,x})_x = f(x), \qquad x\in\Omega^- \cup \Omega^+ ,
\end{equation}
with the boundary conditions
\[
u_0(-1)=\xi, \qquad u_0(1)=\eta ,
\]
and interface conditions
\[
[u_0]_\alpha = 0, \qquad [\beta u_{0,x}]_\alpha = 0 .
\]

Next we define the \emph{unit–jump response} $u_1$ as the solution of
\begin{equation}
-(\beta u_{1,x})_x = 0, \qquad x\in\Omega^- \cup \Omega^+ ,
\end{equation}
with homogeneous boundary conditions
\[
u_1(-1)=0, \qquad u_1(1)=0 ,
\]
and interface conditions
\[
[u_1]_\alpha = 1, \qquad [\beta u_{1,x}]_\alpha = 0 .
\]
The homogeneous interface problem with zero boundary conditions admits a
one–dimensional solution space, which is spanned by the unit–jump response $u_1$.
\begin{proposition}
Let $u$ be the solution of the interface problem. Then there exists a scalar
$s$ such that
\[
u = u_0 + s u_1 ,
\qquad s = [u]_\alpha .
\]
\end{proposition}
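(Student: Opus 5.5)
Set $s:=[u]_\alpha$, which is a well-defined real number once $u$ is a solution, and consider the difference $w:=u-u_0-s\,u_1$. The plan is to show that $w$ solves the homogeneous interface problem with zero jump, and then that this forces $w\equiv 0$.

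\textbf{Step 1: the equations for $w$.} By linearity of the operator $-(\beta\,\cdot_x)_x$ on each subdomain, $w$ satisfies $-(\beta w_x)_x = f - f - s\cdot 0 = 0$ on $(-1,\alpha)\cup(\alpha,1)$.

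\textbf{Step 2: the boundary conditions.} We have $w(-1)=\xi-\xi-0=0$ and $w(1)=\eta-\eta-0=0$.

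\textbf{Step 3: the interface conditions.} Jumps are linear, so the flux jump is $[\beta w_x]_\alpha=[\beta u']_\alpha-[\beta u_{0,x}]_\alpha-s[\beta u_{1,x}]_\alpha=0$. The solution jump is $[w]_\alpha=[u]_\alpha-0-s\cdot 1=0$, by the choice of $s$. The nonlinear law $[u]=g(u^+,u^-)$ is never used here; only the fact that $[u]_\alpha$ is some number matters.

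\textbf{Step 4: uniqueness.} Since $w$ is continuous at $\alpha$ with continuous flux and satisfies the homogeneous equation on each side, it is a weak solution of $-(\beta w')'=0$ on $(-1,1)$ with $w\in H^1_0(-1,1)$. Testing with $w$ gives $\int_{-1}^1\beta|w'|^2\,dx=0$. Because $\beta\ge\min(\beta^-,\beta^+)>0$, this forces $w'=0$, and the boundary values then give $w\equiv0$.

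Alternatively, one can argue explicitly: $w$ is affine on each side, say $w=a^\pm x+b^\pm$. The two boundary conditions, continuity of $w$ and continuity of $\beta w'$ form a $4\times4$ linear system whose only solution is zero, because $\beta^\pm>0$ and $\alpha\in(-1,1)$ make it nonsingular. This is the same computation that shows the homogeneous problem's solution space is spanned by $u_1$.

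Hence $u=u_0+s\,u_1$ with $s=[u]_\alpha$. Consistency follows automatically, since $[u_0+s u_1]_\alpha=s$.

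The only step needing care is the uniqueness in Step 4, specifically justifying the integration by parts across $\alpha$. That integration by parts works precisely because the flux jump vanishes: the interface term $[\beta w' w]_\alpha$ equals zero since $w$ is continuous and $[\beta w']_\alpha=0$. We assume, as is implicit in the paper, that solutions are piecewise smooth, so these traces exist.
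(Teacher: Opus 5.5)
Your proof is correct and is essentially the paper's argument: the paper sets $w=u-u_0$, notes that $w$ solves the homogeneous flux-continuous problem with zero boundary data and $[w]_\alpha=[u]_\alpha$, and concludes $w=[u]_\alpha u_1$ because that solution space is one-dimensional. The only difference is that the paper states this one-dimensionality without proof, whereas your subtraction of $s u_1$ followed by the energy identity (or the explicit $4\times 4$ system, whose determinant is, up to sign, $\beta^+(\alpha+1)+\beta^-(1-\alpha)>0$) actually proves it.
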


\begin{proof}
Let $w = u-u_0$.
Subtracting the equations satisfied by $u$ and $u_0$ shows that $w$ satisfies
\[
-(\beta w_x)_x = 0
\]
in each subdomain, together with homogeneous boundary conditions and flux continuity across the interface.
Moreover,
\[
[w]_\alpha = [u]_\alpha .
\]
Since the homogeneous interface problem is linear and its solution space
is one–dimensional, $w$ must be a multiple of the unit–jump solution $u_1$.
Thus $w = s u_1$ with $s=[u]_\alpha$, which yields the result.
\end{proof}

\subsection{Scalar interface equation}

Substituting the decomposition $u = u_0 + s u_1$ into the interface
condition yields a scalar nonlinear equation for the jump amplitude $s$.
\begin{theorem}[Scalar interface reduction]
Let $u_0$ and $u_1$ be defined as above. Then the solution of the
nonlinear interface problem can be written as
\[
u = u_0 + s u_1,
\]
where the scalar $s$ satisfies the nonlinear equation
\[
s = g\big(u_0^+ + s u_1^+,\; u_0^- + s u_1^-\big).
\]
\end{theorem}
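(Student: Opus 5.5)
The plan is to combine the preceding Proposition with the nonlinear jump condition and to use linearity of one-sided traces. Let $u$ solve the nonlinear interface problem. The Proposition applies, since its proof used only the bulk equation, the boundary data $u(-1)=\xi$, $u(1)=\eta$, and flux continuity $[\beta u']=0$, and never the form of the jump law. It gives $u=u_0+s\,u_1$ with $s=[u]_\alpha$. This settles the representation part of the theorem.

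Next I will evaluate one-sided traces at $\alpha$. The functions $u_0$ and $u_1$ are piecewise smooth; in one dimension they are explicitly piecewise linear plus a particular solution, so the limits $u_0^\pm$ and $u_1^\pm$ exist. Since taking one-sided limits is linear, we get
\[
u^\pm = u_0^\pm + s\,u_1^\pm .
\]
Substituting this into the transmission condition $[u]_\alpha=g(u^+,u^-)$ and using $s=[u]_\alpha$ gives
\[
s = g\big(u_0^+ + s u_1^+,\; u_0^- + s u_1^-\big).
\]
As a consistency check, $[u_0]=0$ and $[u_1]=1$ give $[u_0+s u_1]=s$, so the left-hand side is indeed the jump.

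The one point needing care is the hypothesis behind the Proposition: that the homogeneous interface problem has a one-dimensional solution space spanned by $u_1$. I will verify this directly. A function $w$ satisfying $-(\beta w')'=0$ on each side is affine on each side, which gives four constants. The two boundary conditions and flux continuity impose three independent linear constraints, leaving one free parameter, which is fixed by the jump. Explicitly,
\[
u_1=\begin{cases} a(x+1), & x<\alpha,\\ b(x-1), & x>\alpha,\end{cases}
\]
with $\beta^-a=\beta^+b$ and $b(\alpha-1)-a(\alpha+1)=1$. This $2\times2$ system has a nonzero determinant because $\beta^\pm>0$, so $u_1$ exists uniquely, and $u_0$ exists by the standard coercive weak formulation. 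The argument therefore presupposes only that a solution $u$ of the nonlinear problem exists. The theorem asserts no existence or uniqueness of $s$, so no fixed-point argument is needed.
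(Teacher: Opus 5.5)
Your proposal is correct and follows the paper's proof: you take the decomposition $u=u_0+s\,u_1$ from the preceding Proposition, pass to one-sided traces, use $[u_0]=0$ and $[u_1]=1$ to get $[u]=s$, and substitute into $[u]=g(u^+,u^-)$. Your explicit check that the homogeneous problem has a one-dimensional solution space fills in a step the paper only asserts; note that the determinant $-(\beta^-(1-\alpha)+\beta^+(1+\alpha))\neq 0$ uses $\alpha\in(-1,1)$ as well as $\beta^\pm>0$.
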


\begin{proof}
From the decomposition $u = u_0 + s u_1$, we have
\[
u^\pm = u_0^\pm + s u_1^\pm.
\]
Taking the jump gives
\[
[u] = s.
\]
Substituting into the nonlinear interface condition
\[
[u] = g(u^+,u^-)
\]
yields the stated scalar equation.
\end{proof}
\rev{
\begin{remark}[Existence and uniqueness]
Define
\[
F(s)
=
s-
g\!\left(
u_0^+ + s u_1^+,
u_0^- + s u_1^-
\right).
\]

If $g$ is continuous, existence of a solution follows whenever
$F(s)$ changes sign on a bounded interval.

Furthermore, if
\[
\left|
\frac{\partial g}{\partial u^+}u_1^+
+
\frac{\partial g}{\partial u^-}u_1^-
\right|
<1,
\]
then
\[
F'(s)>0,
\]
and the reduced scalar equation admits a unique solution.

For particular interface laws, such as the quadratic law
\[
[u]=\lambda u^+u^-,
\]
more explicit uniqueness conditions may be obtained.
\end{remark}
}

\begin{proposition}
Suppose the interface law $g(u^+,u^-)$ is a polynomial of degree $k$ in
its arguments. Then the reduced interface equation for $s$ is a
polynomial of degree at most $k$.
\end{proposition}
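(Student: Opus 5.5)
The plan is to exploit the fact that, by the scalar interface reduction theorem, the reduced equation reads
\[
F(s)= s-g\big(u_0^+ + s u_1^+,\; u_0^- + s u_1^-\big)=0 .
\]
The traces $u_0^\pm$ and $u_1^\pm$ are fixed real numbers determined by the linear problems defining $u_0$ and $u_1$. Neither depends on $s$ or on $g$. The arguments of $g$ are therefore affine functions of $s$, and the claim reduces to a statement about composing a polynomial with affine maps.

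First I write the polynomial as $g(a,b)=\sum_{i+j\le k} c_{ij}\,a^i b^j$, with total degree $k$. I then substitute $a=u_0^+ + s u_1^+$ and $b=u_0^- + s u_1^-$. Each factor $a^i$ expands by the binomial theorem into a polynomial in $s$ of degree at most $i$, and similarly $b^j$ has degree at most $j$. Hence each monomial $a^i b^j$ becomes a polynomial in $s$ of degree at most $i+j\le k$.

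Summing over the monomials, $g(u_0^+ + s u_1^+, u_0^- + s u_1^-)$ is a polynomial in $s$ of degree at most $k$. Subtracting it from $s$, the equation $F(s)=0$ is a polynomial equation of degree at most $\max(k,1)$. For $k\ge1$ this is at most $k$. If $k=0$, then $g$ is a constant and the equation is linear in $s$; I would add a one-line remark that the bound is then read as $\max(k,1)$, or simply assume $k\ge 1$ as the intended setting.

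There is no real obstacle here. The only point needing care is the phrase ``at most'': the leading coefficient, for example $\sum_{i+j=k} c_{ij}(u_1^+)^i(u_1^-)^j$, may vanish, so the exact degree can drop. As an illustration I would note the quadratic law $[u]=\lambda u^+u^-$. It gives $F(s)=s-\lambda(u_0^+ + s u_1^+)(u_0^- + s u_1^-)$, which is a quadratic in $s$ with leading coefficient $-\lambda u_1^+u_1^-$.
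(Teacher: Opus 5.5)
Your proof is correct and follows the same route as the paper: the traces $u_0^\pm + s u_1^\pm$ are affine in $s$, so composing a polynomial of total degree $k$ with them gives a polynomial in $s$ of degree at most $k$. Your expansion of the monomials $a^i b^j$ makes explicit what the paper asserts in one line. Your observation that $F(s)=s-g(\cdot)$ has degree $\max(k,1)$ is also a fair refinement: it shows that for $k=0$ the bound only holds for the right-hand side, which is how the paper implicitly reads the claim.
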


\begin{proof}
From the representation $u = u_0 + s u_1$, the traces of the solution at
the interface are
\[
u^\pm(\alpha) = u_0^\pm(\alpha) + s u_1^\pm(\alpha).
\]
Substituting these expressions into the nonlinear interface condition
yields

\[
s = g(u_0^+(\alpha) + s u_1^+(\alpha),
      u_0^-(\alpha) + s u_1^-(\alpha)).
\]

If $g$ is polynomial of degree $k$, the right-hand side is polynomial in
$s$ of degree at most $k$, which yields the result.
\end{proof}
In particular, for the quadratic interface law
\[
[u]_\alpha = \lambda u^+(\alpha)u^-(\alpha),
\]
the reduced equation for $s$ is quadratic and may be solved
analytically or by a simple scalar Newton iteration.
Thus the nonlinear interface problem reduces to solving a scalar nonlinear equation for the interface variable $s$, while the remaining problem for $u_0$ is linear.
\rev{
\begin{remark}[Extension to nonzero flux jumps]
The scalar reduction extends directly to interface problems with
prescribed flux jumps.  Consider the interface conditions
\[
[\beta u_n]_\Gamma=q,
\qquad
[u]_\Gamma=g(u^+,u^-),
\]
where $q$ is prescribed.

In this case the continuous component $u_0$ is defined by
\[
[u_0]_\Gamma=0,
\qquad
[\beta \nabla u_0\cdot n]_\Gamma=q.
\]

The unit--jump response mode $u_1$ is defined by
\[
[u_1]_\Gamma=1,
\qquad
[\beta \nabla u_1\cdot n]_\Gamma=0.
\]

The representation
\[
u=u_0+s\,u_1
\]
remains valid, and substitution into the nonlinear interface
condition yields the same reduced scalar equation
\[
s=
g\!\left(
u_0^+ + s u_1^+,
u_0^- + s u_1^-
\right).
\]

Thus prescribed flux jumps affect only the construction of the
continuous component $u_0$, while the nonlinear interface coupling
remains entirely represented by the scalar variable $s$.
\end{remark}

}

\subsection{Explicit form of the unit–jump response}

In one dimension the function $u_1$ can be written explicitly. Since
$-(\beta u_{1,x})_x=0$ in each subdomain, the solution is linear on
$\Omega^-$ and $\Omega^+$.

Let
\[
u_1(x)=
\begin{cases}
a^- x + b^- , & x<\alpha,\\
a^+ x + b^+ , & x>\alpha.
\end{cases}
\]
The boundary conditions
\[
u_1(-1)=0, \qquad u_1(1)=0
\]
together with the interface conditions
\[
[u_1]_\alpha=1, \qquad [\beta u_{1,x}]_\alpha=0
\]
determine the coefficients uniquely.
Solving these relations yields
\[
a^-=\frac{\beta^+}{\beta^+(\alpha+1)+\beta^-(1-\alpha)},
\qquad
a^+=\frac{\beta^-}{\beta^+(\alpha+1)+\beta^-(1-\alpha)}.
\]
Thus the unit–jump response is piecewise linear and depends only on the
diffusion coefficients and the interface location.

Let $\mathcal{T}_h$ be a partition of $\Omega$ such that the interface point $\alpha$
is a mesh node.  We denote by $V_h$ the set of continuous piecewise
linear functions associated with $\mathcal{T}_h$ satisfying the boundary
conditions
\[
V_h = \{ v_h \in C(\Omega) : v_h|_K \text{ is linear on each element } K,
\; v_h(-1)=\xi,\; v_h(1)=\eta \}.
\]

The weak formulation of the elliptic interface problem is:
find $u \in H^1(\Omega^- \cup \Omega^+)$ such that
\begin{equation}
\int_\Omega \beta u_x v_x\,dx
=
\int_\Omega f v\,dx ,
\qquad \forall v \in H_0^1(\Omega),
\end{equation}
together with the nonlinear interface condition
\begin{equation}
[u]_\alpha = g(u^+(\alpha),u^-(\alpha)).
\end{equation}

\subsection{Discrete decomposition}

Following the continuous construction, we compute a discrete zero–jump solution
$u_{0,h}\in V_h$ satisfying
\begin{equation}
\int_\Omega \beta u_{0,h,x} v_{h,x}\,dx
=
\int_\Omega f v_h\,dx ,
\qquad \forall v_h \in V_{h,0},
\end{equation}
where
\[
V_{h,0} = \{ v_h\in V_h : v_h(-1)=0,\; v_h(1)=0 \}.
\]

Because the mesh is fitted to the interface, the function $u_{0,h}$ is
continuous at $\alpha$ and satisfies the flux continuity condition
automatically.

Next we define the discrete unit–jump mode $u_1$.
Since the interface coincides with a mesh node, $u_1$ can be obtained
explicitly as the piecewise linear function satisfying
\begin{equation}
-(\beta u_{1,x})_x = 0
\end{equation}
in each subdomain, together with
\[
u_1(-1)=0, \qquad u_1(1)=0,
\]
and
\[
[u_1]_\alpha = 1, \qquad [\beta u_{1,x}]_\alpha = 0 .
\]

The discrete solution is then represented as
\begin{equation}
u_h = u_{0,h} + s_h u_1 ,
\end{equation}
where $s_h$ is the discrete jump amplitude.

\subsection{Discrete scalar interface equation}

The traces of the numerical solution at the interface are
\begin{equation}
u_h^\pm(\alpha)
=
u_{0,h}^\pm(\alpha) + s_h u_1^\pm(\alpha).
\end{equation}

Substituting these expressions into the nonlinear interface condition yields a
scalar equation for $s_h$:
\begin{equation}
s_h =
g\!\left(
u_{0,h}^+(\alpha)+s_h u_1^+(\alpha),
u_{0,h}^-(\alpha)+s_h u_1^-(\alpha)
\right).
\end{equation}

This scalar equation can be solved by Newton iteration or by direct root
finding.  Once $s_h$ is obtained, the numerical solution is recovered from
\begin{equation}
u_h = u_{0,h} + s_h u_1 .
\end{equation}

Thus the nonlinear interface problem reduces to solving a single scalar
equation, while the finite element computation for $u_{0,h}$ remains linear and
standard.
\subsection*{Algorithm 1: Scalar interface reduction}

Given the interface point $\alpha$ and the nonlinear jump law
$[u]_\alpha = g(u^+,u^-)$, the finite element method proceeds as follows.

\begin{enumerate}
\item \textbf{Bulk solve.}
Solve the standard finite element problem for the continuous component
$u_{0,h}$.

\item \textbf{Interface response.}
Construct the unit--jump response function $u_1$ satisfying
\[
[u_1]_\alpha = 1, \qquad [\beta u_{1,x}]_\alpha = 0,
\]
with homogeneous boundary conditions.

\item \textbf{Scalar interface equation.}
Determine the interface variable $s_h$ from
\[
s_h =
g\big(u_{0,h}^+(\alpha)+s_h u_1^+(\alpha),
      u_{0,h}^-(\alpha)+s_h u_1^-(\alpha)\big).
\]

\item \textbf{Reconstruction.}
Recover the numerical solution
\[
u_h = u_{0,h} + s_h u_1 .
\]
\end{enumerate}

\begin{remark}[Interface response interpretation]

The function $u_1$ may be viewed as the response of the differential
operator to a unit jump imposed at the interface. In this sense,
$u_1$ plays a role analogous to a Green-type response associated with
an interface discontinuity rather than a point source.
\end{remark}

\subsection{Implications for numerical accuracy}

The scalar decomposition also provides a useful representation of the numerical
error and explains why the observed error profile is governed by the
unit--jump response.

\begin{theorem}[Error decomposition and interface response mode]\label{thm:error_mode}
Let the exact solution be written as
\[
u = u_0 + s u_1,
\]
and let the numerical approximation be reconstructed in the form
\[
u_h = u_{0,h} + s_h u_{1,h}.
\]
Then the error satisfies the exact decomposition
\begin{equation}\label{eq:error-splitting}
u-u_h
=
(u_0-u_{0,h})
+
s(u_1-u_{1,h})
+
(s-s_h)u_{1,h}.
\end{equation}
Consequently,
\begin{equation}\label{eq:error-bound}
\|u-u_h\|
\le
\|u_0-u_{0,h}\|
+
|s|\,\|u_1-u_{1,h}\|
+
|s-s_h|\,\|u_{1,h}\|.
\end{equation}

In particular, if the unit--jump response is represented exactly, that is,
if
\[
u_{1,h}=u_1,
\]
then
\begin{equation}\label{eq:error-leading}
u-u_h
=
(u_0-u_{0,h})
+
(s-s_h)u_1.
\end{equation}
Hence, after subtracting the standard bulk discretization error
$u_0-u_{0,h}$, the remaining error is exactly proportional to the
interface response function $u_1$.
\end{theorem}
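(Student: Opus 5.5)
The identity \eqref{eq:error-splitting} is pure algebra, so I will verify it by adding and subtracting the mixed term $s\,u_{1,h}$. Starting from the two representations $u=u_0+s u_1$ (the preceding Proposition and the Scalar interface reduction theorem) and $u_h=u_{0,h}+s_h u_{1,h}$, I write
\[
u-u_h=(u_0-u_{0,h})+\bigl(s u_1 - s u_{1,h}\bigr)+\bigl(s u_{1,h}-s_h u_{1,h}\bigr),
\]
and regroup to obtain $(u_0-u_{0,h})+s(u_1-u_{1,h})+(s-s_h)u_{1,h}$. This holds pointwise on $\Omega^-\cup\Omega^+$, so it holds as an identity of functions in whatever space the norm is taken (e.g.\ broken $L^2$, broken $H^1$, or $L^\infty$). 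No property of $g$ is used, only the two linear representations. The choice of pivot term matters: pivoting on $s u_{1,h}$ rather than $s_h u_1$ is what puts the exact $s$ on the mode-approximation error and $u_{1,h}$ on the jump error, matching the stated form.

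For \eqref{eq:error-bound}, I will apply the triangle inequality to the three terms and use absolute homogeneity of the norm to pull out the scalars $|s|$ and $|s-s_h|$. The only point to check is that all functions lie in a common normed space. Since $u$, $u_1$ and $u_{1,h}$ are discontinuous at $\alpha$, the norm must be a broken (subdomain-wise) norm. I will state this explicitly, because any such norm works verbatim.

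For the special case \eqref{eq:error-leading}, I will set $u_{1,h}=u_1$ in \eqref{eq:error-splitting}. The middle term vanishes and the last becomes $(s-s_h)u_1$. I will also note that this hypothesis is realized by the method as described: in the discrete decomposition $u_1$ is the explicit piecewise linear function, and since the mesh is fitted ($\alpha$ is a node) it lies exactly in the broken piecewise-linear space. So $u_{1,h}=u_1$ holds in Algorithm 1.

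The final interpretive sentence follows by rearranging to $u-u_h-(u_0-u_{0,h})=(s-s_h)u_1$. There is no real obstacle here; the only care needed is in the function-space convention for the norm.
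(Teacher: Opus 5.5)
Your proposal is correct and follows the paper's own proof essentially verbatim: add and subtract $s\,u_{1,h}$, apply the triangle inequality with homogeneity of the norm, and set $u_{1,h}=u_1$ to remove the middle term. Your two additions are sound points that the paper leaves implicit: the norm has to be a broken (subdomain-wise) norm, and the hypothesis $u_{1,h}=u_1$ actually holds for Algorithm~1 on the fitted mesh.
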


\begin{proof}
Using the representations
\[
u=u_0+s u_1,
\qquad
u_h=u_{0,h}+s_h u_{1,h},
\]
we subtract the two expressions to obtain
\[
u-u_h
=
(u_0-u_{0,h})
+
s u_1 - s_h u_{1,h}.
\]
Adding and subtracting $s u_{1,h}$ gives
\[
s u_1 - s_h u_{1,h}
=
s(u_1-u_{1,h}) + (s-s_h)u_{1,h}.
\]
Substituting this into the previous identity yields
\eqref{eq:error-splitting}. The norm estimate \eqref{eq:error-bound}
follows immediately from the triangle inequality. If $u_{1,h}=u_1$,
the second term vanishes, and \eqref{eq:error-leading} follows.
\end{proof}

Theorem \ref{thm:error_mode} shows that, when the unit--jump response is represented
exactly, the interface component of the numerical error is completely
determined by the scalar jump error $s-s_h$. Figure~1 confirms this
relationship numerically.

\begin{figure}[t]
\centering
\includegraphics[width=0.8\textwidth]{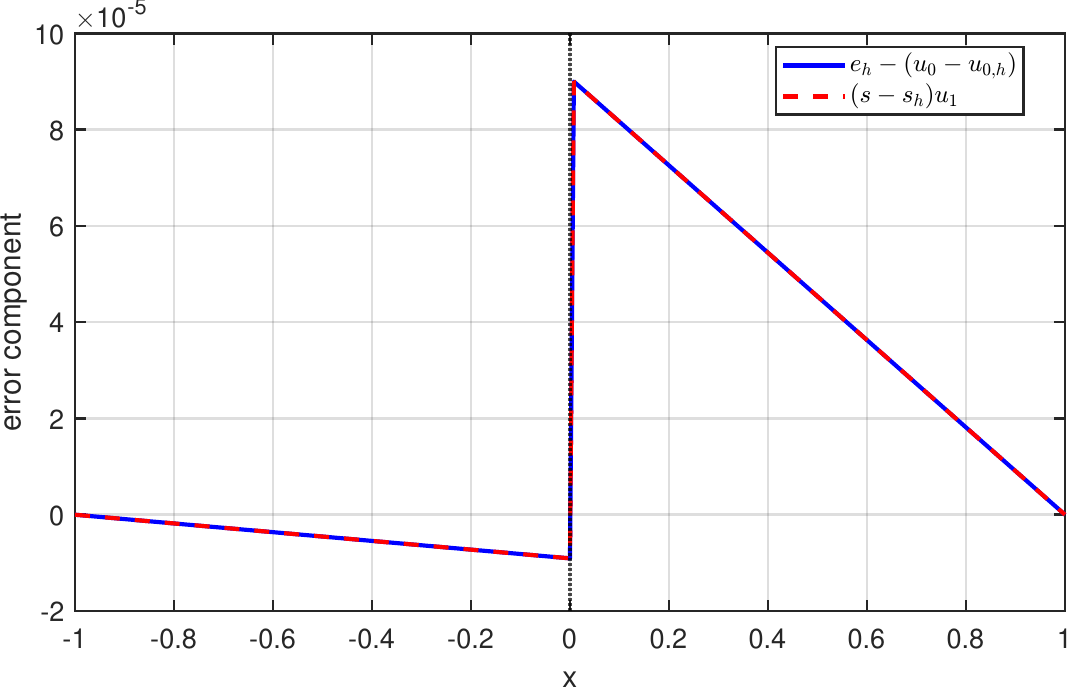}
\caption{Confirmation of Theorem~\ref{thm:error_mode}.
The curves $(u-u_h)-(u_0-u_{0,h})$ and $(s-s_h)u_1$
coincide numerically, confirming that the interface
error component is proportional to the unit--jump
response function $u_1$.}
\label{fig:interface_mode}
\end{figure}
\begin{remark}[Interface localization]
The decomposition
\[
u-uh=(u_0-u_{0,h})+(s-s_h)u_1
\]
also explains the localization of numerical errors near the
interface that is frequently observed in interface computations.
Since $u_1$ is generated by a unit jump at the interface,
the component $(s-s_h)u_1$ naturally attains its largest
magnitude near the interface.
\end{remark}
\rev{
\subsection*{Algebraic interpretation}

The Schur complement interpretation may be made more explicit at the
discrete level.

Suppose the discrete unknowns are partitioned into bulk variables
$U$ and the interface variable $s$.  The nonlinear discrete system
may be written schematically as
\[
A U + B s = F,
\]
together with a nonlinear interface relation
\[
s = G(U,s).
\]

Assuming $A$ is invertible, the bulk variables may be eliminated,
yielding
\[
U=A^{-1}(F-Bs).
\]

Substituting this expression into the interface relation gives
\[
s=\Phi(s),
\]
where
\[
\Phi(s)
=
G\!\left(A^{-1}(F-Bs),s\right).
\]

This reduced scalar equation is precisely the nonlinear equation
obtained from the interface reduction.  In this sense, the scalar
interface equation may be viewed as a nonlinear Schur complement
associated with the interface degree of freedom after elimination of
all bulk variables.
}
%\section{Extension to parabolic interface problems}

The scalar interface reduction extends naturally to time–dependent interface
problems.  We consider the parabolic equation
\begin{equation}
u_t - (\beta u_x)_x = f(x,t),
\qquad x\in\Omega^- \cup \Omega^+, \quad t>0,
\end{equation}
with boundary conditions
\begin{equation}
u(-1,t)=\xi(t), \qquad u(1,t)=\eta(t),
\end{equation}
and interface conditions
\begin{equation}
[u]_\alpha = g(u^+(\alpha,t),u^-(\alpha,t)),
\qquad
[\beta u_x]_\alpha = 0 .
\end{equation}

An initial condition
\begin{equation}
u(x,0) = u_0(x)
\end{equation}
is also prescribed.

\subsection{Time discretization}

Let $t^n = n\Delta t$ denote the time levels.  Using backward Euler
time discretization we obtain the semi–discrete problem
\begin{equation}
\frac{u^{n+1}-u^n}{\Delta t}
- (\beta u^{n+1}_x)_x
=
f^{n+1}.
\end{equation}

\subsection{Scalar interface reduction}

At each time step the solution is represented in the form
\begin{equation}
u^{n+1} = u_0^{n+1} + s^{\,n+1} u_1 ,
\end{equation}
where $u_1$ is the same unit–jump response mode introduced in the elliptic
problem.

The function $u_0^{n+1}$ is obtained by solving the linear finite element
problem
\begin{equation}
\int_\Omega \frac{u_0^{n+1}-u^n}{\Delta t} v_h \, dx
+
\int_\Omega \beta u_{0,x}^{n+1} v_{h,x} \, dx
=
\int_\Omega f^{n+1} v_h \, dx ,
\qquad \forall v_h \in V_{h,0}.
\end{equation}

The interface traces of the numerical solution are then
\begin{equation}
u^{n+1,\pm}(\alpha)
=
u_0^{n+1,\pm}(\alpha) + s^{\,n+1} u_1^\pm(\alpha).
\end{equation}

Substituting these expressions into the nonlinear interface condition yields
the scalar equation
\begin{equation}
s^{\,n+1}
=
g\!\left(
u_0^{n+1,+}(\alpha) + s^{\,n+1} u_1^+(\alpha),
u_0^{n+1,-}(\alpha) + s^{\,n+1} u_1^-(\alpha)
\right).
\end{equation}

Thus each time step requires solving a linear finite element problem for
$u_0^{n+1}$ followed by a scalar nonlinear equation for the interface
variable $s^{\,n+1}$.  The numerical solution is then reconstructed as
\begin{equation}
u^{n+1} = u_0^{n+1} + s^{\,n+1} u_1 .
\end{equation}
%\section{Numerical experiments}

The scalar reduction developed in Section 2 leads to a computational
procedure in which the bulk components are obtained from linear problems,
while the nonlinear interface condition is resolved through a scalar
equation for the interface variable $s$. The following experiments are
designed to verify the accuracy of this reduction and its impact on the
interface quantities.
In this section we verify the method on benchmark problems of the same type as those used in \cite{Chou2025}. The numerical evidence is organized around the interface quantities, namely the left and right traces and, in the nonlinear case, the jump variable.

\subsection{Elliptic problem with prescribed jump}
We first consider
\[
-(\beta u')' = 0.1\sin(\pi x), \qquad -1\le x\le 1,
\]
with boundary conditions
\[
u(-1)=u(1)=0,
\]
interface point $\alpha=0$, coefficients
\[
\beta^- = 1,
\qquad
\beta^+ = 0.1,
\]
and prescribed jump
\[
[u]_\alpha = 1.1.
\]
The exact solution is
\[
u(x)=
\begin{cases}
\dfrac{0.1}{\pi^2}\sin(\pi x)-0.1(x+1), & -1\le x\le 0,\\
\\
\dfrac{1}{\pi^2}\sin(\pi x)-(x-1), & 0<x\le 1.
\end{cases}
\]
Since the jump is prescribed exactly, the numerical method reproduces the
interface jump to machine precision. The relevant quantities are therefore
the interface trace errors and the global $L^\infty$ error. Table~1 reports
these errors for a sequence of uniform meshes, where the column labeled $MR$
denotes the mesh resolution, i.e. the number of subintervals in the uniform
partition of $\Omega$. The results show that the interface traces are computed
to machine precision and that the $L^\infty$ error converges with second-order
accuracy.

\begin{table}[htbp]
\centering
\caption{Elliptic linear-jump example: interface trace and $L^\infty$ errors.}
\begin{tabular}{c c c c c}
\toprule
MR & $h$ & left trace error & right trace error & $L^\infty$ error \\
\midrule
8   & 0.125000   & $1.39e{-17}$ & $0.00e+{00}$   & $6.5989e{-4}$ \\
16  & 0.062500   & $9.71e{-17}$ & $1.11e{-16}$ & $1.6331e{-4}$ \\
32  & 0.031250   & $1.53e{-16}$ & $2.22e{-16}$ & $4.0724e{-5}$ \\
64  & 0.015625   & $3.75e{-16}$ & $4.44e{-16}$ & $1.0175e{-5}$ \\
128 & 0.0078125  & $5.13e{-16}$ & $4.44e{-16}$ & $2.5433e{-6}$ \\
256 & 0.00390625 & $7.33e{-15}$ & $7.33e{-15}$ & $6.3579e{-7}$ \\
\bottomrule
\end{tabular}
\end{table}

In this benchmark the scalar jump variable is therefore recovered exactly,
so the interface contribution $(s-s_h)u_1$ in Theorem~\ref{thm:error_mode}
is negligible and the numerical error is dominated by the bulk discretization
term $u_0-u_{0,h}$. Figure~2 illustrates the resulting spatial error profile
for the linear prescribed-jump case. Since the interface jump is imposed exactly,
the figure reflects primarily the bulk discretization error rather than
the pure interface response mode.

\begin{figure}[h]
\centering
\includegraphics[width=0.65\textwidth]{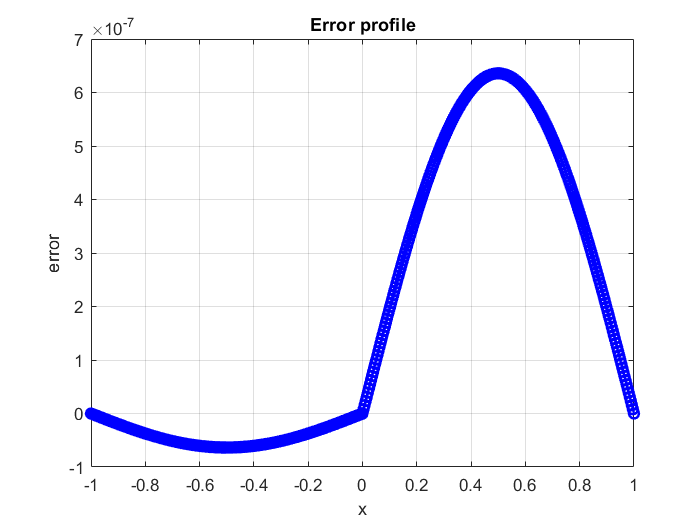}
\caption{Error profile for the elliptic interface test.}
\end{figure}

\subsection{Parabolic problem with prescribed jump}
Next we consider the linear-jump parabolic benchmark. Backward Euler is used in time with $\Delta t = h^2$. Since the jump is prescribed exactly, the meaningful interface diagnostics are the left and right trace errors at the final time $T=1$.

\begin{align}
    &u_{t}-(\beta u')' = f(x,t), \quad -1\leq x \leq 1, t\in (0,T]\\
        &u(-1,t)=0, \quad u(1,t)=0,\\
        &[u]_{\alpha}=\mu,[\beta u']_{\alpha}=0,\\
    &u(x,0)=u_{0}(x)
\end{align}
with $\alpha=0$, $\mu=1$, $\beta^-=1, \beta^+=0.1$.
The exact solution
\begin{equation}
    u(x,t)=\begin{cases}
    e^{t}\sin(\pi x)+A(x+1) & -1\le x\le 0,\\
     10e^{t}\sin(\pi x)+B(x-1)& 0< x\le 1,
\end{cases}
\end{equation}
where
\begin{equation}
 A = -\dfrac{\mu}{11}, \quad  B=10A.
\end{equation}

\begin{table}[htbp]
\centering
\caption{Parabolic linear-jump example: final-time interface trace and $L^\infty$ errors.}
\begin{tabular}{c c c c c c}
\toprule
MR & $h$ & $\Delta t$ & left trace error & right trace error & $L^\infty(T)$ \\
\midrule
8   & 0.125000  & 0.015625   & $2.0999e{-3}$ & $2.0999e{-3}$ & $1.1686e{-2}$ \\
16  & 0.062500  & 0.00390625 & $5.2540e{-4}$ & $5.2540e{-4}$ & $2.8613e{-3}$ \\
32  & 0.031250  & 0.00097656 & $1.3138e{-4}$ & $1.3138e{-4}$ & $7.1180e{-4}$ \\
64  & 0.015625  & 0.00024414 & $3.2845e{-5}$ & $3.2845e{-5}$ & $1.7792e{-4}$ \\
128 & 0.0078125 & 0.00006104 & $8.2115e{-6}$ & $8.2115e{-6}$ & $4.4466e{-5}$ \\
\bottomrule
\end{tabular}
\end{table}

The results in Table 2 confirm second-order spatial accuracy for the interface traces in the parabolic linear-jump case.

\subsection{Parabolic problem with nonlinear jump}\label{EXAMPLE 3}
Finally we test the nonlinear jump law
\[
[u]_\alpha = \lambda u^+(\alpha)u^-(\alpha).
\]
At each time step, the scalar interface variable $s$ is obtained from the reduced nonlinear equation

Consider
\begin{align*}
    &u_{t}-(\beta u')' = f, \quad -1\leq x \leq 1, t\in(0,T]\\
    &u(-1,t)=0, \quad u(1,t)=2,\\
    &[\beta u']_{\alpha}=0,\\
    &[u]_{\alpha}=0.5 u^{+}u^{-},\\
    &u(x,0)=u_0(x),
\end{align*}
where $\beta^-=1$ and $ \beta^+=0.1$,

\begin{equation}
    u(x,t) =
    \begin{cases}
       e^{-t} \sin(\pi x) + x+1 & -1\le x\le 0,\\
         2+10(\pi e^{- t}+1)\big(x-x^2\big) & 0 < x\le 1.
    \end{cases}
\end{equation}
For the benchmark used here, it can be checked analytically that the exact jump is constant. Thus one can directly measure the jump error at the final time $T=2$ together with the left and right interface trace errors.

\begin{table}[htbp]
\centering
\caption{Parabolic nonlinear-jump example: final-time jump, interface trace, and $L^\infty$ errors.}
\begin{tabular}{c c c c c c c}
\toprule
MR & $h$ & $\Delta t$ & jump error & left trace error & right trace error & $L^\infty(T)$ \\
\midrule
8   & 0.125000  & 0.015625   & $2.5414e{-2}$ & $8.5684e{-3}$ & $3.3982e{-2}$ & $3.0749e{-2}$ \\
16  & 0.062500  & 0.00390625 & $6.3784e{-3}$ & $2.1322e{-3}$ & $8.5106e{-3}$ & $8.1558e{-3}$ \\
32  & 0.031250  & 0.00097656 & $1.5962e{-3}$ & $5.3243e{-4}$ & $2.1286e{-3}$ & $2.0879e{-3}$ \\
64  & 0.015625  & 0.00024414 & $3.9914e{-4}$ & $1.3307e{-4}$ & $5.3221e{-4}$ & $5.2737e{-4}$ \\
128 & 0.0078125 & 0.00006104 & $9.9791e{-5}$ & $3.3265e{-5}$ & $1.3306e{-4}$ & $1.3247e{-4}$ \\
\bottomrule
\end{tabular}
\end{table}

From Table 3, we see that the jump error, left trace error, and right trace error all converge with order two. This supports the scalar interface reduction developed in the preceding sections.

These results confirm that the nonlinear interface problem is effectively
reduced to the computation of a scalar variable $s$, while the bulk solution
retains the accuracy of a standard linear discretization.
\rev{
\subsection*{Computational efficiency}

An important advantage of the proposed formulation is that the
nonlinear interface problem is reduced to a scalar nonlinear equation.

Traditional nonlinear finite element formulations typically require
repeated assembly and solution of large nonlinear algebraic systems.
In contrast, the present approach consists of

\begin{enumerate}
\item computation of the continuous component $u_0$,
\item construction of the unit--jump response mode $u_1$,
\item solution of a scalar nonlinear equation for the jump variable
$s$.
\end{enumerate}

The dimension of the nonlinear problem is therefore independent of
the mesh size.  As the mesh is refined, the additional nonlinear
computational cost remains essentially constant.
}
\section{Conclusion}

We have presented a simple scalar interface reduction for finite element discretizations of nonlinear interface problems. The central
idea is to represent the solution as the sum of a continuous component and a
unit–jump response mode.  This decomposition isolates the interface
discontinuity into a single scalar variable while the bulk finite element
problem remains linear and standard. The results confirm that the proposed formulation provides a simple and robust framework for treating nonlinear interface conditions within standard finite element discretizations.

The nonlinear interface condition is thereby reduced to a scalar nonlinear
equation for the jump amplitude.  This formulation provides a simple and
computationally efficient treatment of nonlinear interface laws.  The same
structure extends naturally to time–dependent problems, where each time step
requires solving one linear finite element system followed by a scalar
interface equation.

The decomposition also admits a natural structural interpretation.  The
unit–jump mode may be viewed as the response of the differential operator to
an interface source, while the scalar interface equation plays the role of a
reduced Schur complement relation governing the interface variable.

Numerical experiments for several elliptic and parabolic benchmark problems
demonstrate second–order accuracy for the interface traces and for the computed
jump variable.  The results confirm that the proposed formulation provides a
simple and robust framework for treating nonlinear interface conditions within
standard finite element discretizations.

The scalar reduction is not restricted to one dimension. In higher
dimensions, the decomposition $u = u_0 + s u_1$ remains valid, where
$u_1$ represents the response of the differential operator to a unit
interface jump and depends only on the interface geometry and the
coefficients. The nonlinear interface condition then reduces to a
low-dimensional nonlinear system associated with interface degrees of
freedom. In the case of a single connected interface component, this
system remains scalar, and in general it involves one degree of freedom
per interface component. This perspective suggests a natural extension
of the present formulation to multidimensional interface problems. The essential structure of the reduction, namely the separation of bulk
and interface variables, is therefore preserved in higher dimensions.

\bibliographystyle{siam}
\bibliography{references}

@book{li2006immersed,
  title={The immersed interface method: numerical solutions of PDEs involving interfaces and irregular domains},
  author={Li, Zhilin and Ito, Kazufumi},
  year={2006},
  publisher={SIAM}
}

@article{Chou2025,
  title={A finite difference method for an interface problem with a nonlinear jump condition},
  author={Chou, So-Hsiang and Khaemba, Caleb and Wachira, Alice},
  journal={Int. J. Numer. Anal. Model.},
  volume={22},
  number={6},
  pages={801-823},
  year={2025},
  publisher={Global Science Press}
}

@article{fries2010extended,
  title={The extended/generalized finite element method: an overview of the method and its applications},
  author={Fries, Thomas-Peter and Belytschko, Ted},
  journal={International journal for numerical methods in engineering},
  volume={84},
  number={3},
  pages={253--304},
  year={2010},
  publisher={Wiley Online Library}
}

@article{jo2018enriched,
  title={Enriched $P_1$-Conforming Methods for Elliptic Interface Problems with Implicit Jump Conditions},
  author={Jo, Gwanghyun and Kwak, Do Y.},
  journal={Advances in Mathematical Physics},
  volume={2018},
  pages={Article ID 9891281},
  year={2018},
  publisher={Hindawi},
  doi={10.1155/2018/9891281}
}

@article{babuvska1997partition,
  title={The partition of unity method},
  author={Babu{\v{s}}ka, Ivo and Melenk, Jens M},
  journal={International journal for numerical methods in engineering},
  volume={40},
  number={4},
  pages={727--758},
  year={1997},
  publisher={Wiley Online Library}
}

@Manual{R,
    title = {R: A Language and Environment for Statistical Computing},
    author = {{R Core Team}},
    organization = {R Foundation for Statistical Computing},
    address = {Vienna, Austria},
    year = {2014},
    url = {http://www.R-project.org/},
  }

@article{HouWu1997,
  author  = {Hou, Thomas Y. and Wu, Xiao-Hui},
  title   = {A multiscale finite element method for elliptic problems in composite materials and porous media},
  journal = {Journal of Computational Physics},
  volume  = {134},
  number  = {1},
  pages   = {169--189},
  year    = {1997}
}

@article{lin2015,
  author  = {Tao Lin and Dongwoo Sheen and Xiaohui Zhang},
  title   = {A locking-free immersed finite element method for planar elasticity interface problems},
  journal = {Journal of Computational Physics},
  volume  = {276},
  pages   = {205--222},
  year    = {2014}
}

@article{leveque1994immersed,
  title={The immersed interface method for elliptic equations with discontinuous coefficients and singular sources},
  author={LeVeque, Randall J and Li, Zhilin},
  journal={SIAM Journal on Numerical Analysis},
  volume={31},
  number={4},
  pages={1019--1044},
  year={1994},
  publisher={SIAM}
}

@article{jeon2022immersed,
  author    = {Jeon, Youngmok and Yi, Son-Young},
  title     = {The Immersed Interface Hybridized Difference Method for Parabolic Interface Problems},
  journal   = {Numerical Mathematics: Theory, Methods and Applications},
  volume    = {15},
  number    = {2},
  pages     = {367--393},
  year      = {2022},
  publisher = {Global Science Press},
  doi       = {10.4208/nmtma.2022.0003}
}

@article{Chou2026FluxRecovery,
  author  = {Champike Attanayake and So-Hsiang Chou},
  title   = {Interface Reduction for Elliptic Interface Problems with Conservative Flux Reconstruction},
  journal = {arXiv preprint arXiv:2605.09800},
  year    = {2026}
}

@article{chou2026lifting,
  title={A Lifting-Based Interface Reduction Framework for Nonlinear Transmission and Eigenvalue Problems},
  author={Chou, So-Hsiang},
  journal={arXiv preprint arXiv:2606.00468},
  year={2026}
}
\end{document}